\newtheorem{theorem}{Theorem}[]
\newtheorem*{theorem*}{Theorem}
\newtheorem*{obs*}{Observation}
\begin{document}
\title{On graphs and their application to \\determinant and set theory\thanks{Translated from~\cite{Kon16} by \'{A}gnes Cseh, Universit\"{a}t Bayreuth. Remarks and corrections are very welcome at \texttt{agnes.cseh@uni-bayreuth.de}.}}
\author{
D\'{e}nes K\H{o}nig\\\bigskip Budapest, October 1915}
\date{}
\maketitle
\begin{abstract}
	The presented work focuses on problems from determinant theory, set theory and topology. The term \emph{graph} is the binding element that connects these problems. Graphs are distinguished by their geometrical simplicity, which helps in showing the equivalence between various seemingly unrelated problems, besides providing solutions to several open questions discussed here.
\end{abstract}
\thispagestyle{empty}
\setcounter{page}{1}
\pagestyle{plain}
\section{Graphs~\cite{Pet91}}
\label{sec:1}

We are given a finite number of points and some pairs of these points are connected by one or more (but finitely many) edges. A figure that arises from such a construction is in general called \emph{graph}. A graph is \emph{connected} if one can reach any of its vertices\footnote{Only the initially fixed points of the graph are called vertices.} from any other vertex via edges. If the graph is disconnected, then it can be decomposed into connected subgraphs in a unique manner. The advantage of a certain algebraic application of graphs lies in the triviality of this statement. If every vertex is connected to the same number of edges, then the graph is called \emph{regular} and the constant number of edges at each vertex is called the \emph{degree} of the regular graph. The so-called \emph{bipartite graphs} play a highly distinguished role. A graph belongs to this class  if every closed line drawn along its edges (walk) consists of an even number of edges. An example for this is the edge system of the cube.

\begin{obs*}
	A graph is bipartite if and only if its vertices can be partitioned into two classes so that every edge runs between vertices in different classes.
\end{obs*}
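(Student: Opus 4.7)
The plan is to prove the two implications of the equivalence separately, first handling a connected graph and then extending to the general case by appealing to the unique decomposition into connected subgraphs mentioned earlier.

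For the easy direction, I would assume the vertices are split into two classes $A$ and $B$ with every edge running between them. Any walk along the edges then alternates between $A$ and $B$, so after traversing $k$ edges the walk sits in $A$ if it started in $A$ and $k$ is even, and in $B$ otherwise. A closed walk returns to its starting class, forcing $k$ to be even. Hence every closed walk has even length, i.e.\ the graph is bipartite in the sense of the definition.

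For the nontrivial direction, suppose every closed walk has even length and assume first that the graph is connected. Fix a vertex $v_0$ and, for every vertex $v$, consider any walk from $v_0$ to $v$; I would colour $v$ white if some such walk has even length and black otherwise. The crucial step is to verify that this colouring is well defined: if two walks $W_1,W_2$ from $v_0$ to $v$ had different parities, then traversing $W_1$ and then $W_2$ in reverse would produce a closed walk of odd length, contradicting the hypothesis. Once well-definedness is established, consider any edge $\{u,v\}$: prepending a walk from $v_0$ to $u$ of some parity to this edge yields a walk from $v_0$ to $v$ of opposite parity, so $u$ and $v$ lie in different colour classes, as desired.

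I expect the main obstacle to be precisely this well-definedness argument, because the concatenation $W_1$ followed by the reverse of $W_2$ is a priori only a closed walk and not a simple closed curve; one must be sure that the definition of \emph{bipartite} in the excerpt refers to arbitrary closed walks rather than only to simple cycles, so that the parity argument applies. Once the connected case is settled, the general case follows immediately by applying the bipartition inside each connected component and taking the union of the resulting colour classes, which is legitimate by the uniqueness of the decomposition into connected subgraphs noted at the beginning of the section.
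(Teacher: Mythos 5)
Your proposal is correct and follows essentially the same route as the paper: the forward direction by the alternation of classes along a closed walk, and the converse by colouring vertices according to the parity of a walk from a fixed base vertex, with an odd closed walk as the only obstruction to well-definedness, then extending componentwise. Your version merely makes explicit the well-definedness check that the paper's sketch leaves implicit.
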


\begin{proof}Indeed if such a partitioning is possible, then every closed walk visits vertices in the first and second vertex classes in an alternating manner, thus the closed walk must consist of an even number of edges. The other direction is also true: one can assign each vertex along a walk in an alternating manner to classes one and two. We run into trouble only if we walked along a closed walk comprising an odd number of edges. This also holds for disconnected graphs, but the classes are not uniquely defined in this case.\end{proof}

One can define disconnected and bipartite graphs in a similar manner, because disconnected graphs can be characterized as follows. Their vertices can be partitioned into two classes so that no two points in the \emph{same} class are connected via an edge.

We call the set of edges $G_k$ of a graph $G$  a \emph{factor of $k$th degree} of $G$ if every vertex of $G$ is incident to exactly $k$ edges in~$G_k$. A factor of $k$th degree is itself a regular graph of degree $k$ and its vertices coincide with the vertices of the original graph~$G$. A set of edges form a factor of \emph{first} degree if every vertex is incident to exactly one edge in the set. If $G$ is regular of degree $n$, while $G_k$ is regular of degree $k$, then the edges outside of $G_k$ form a regular graph $G_{n-k}$ of degree $n-k$. In this case it follows from Petersen~\cite{Pet91} that $G = G_k G_{n-k}$.

The decomposition of a regular graph into more than two factors can be defined analogously. The sum of the degree of factors always equals the degree of the original graph. In the following, we concentrate on decomposing a regular graph into factors of first degree.

Every graph of second degree consists of simple (non-crossing) closed walks. This has a factor of first degree if and only if all these closed walks consist of an even number of edges~\cite[page 195]{Pet91}. As an extension of this fact we will show the following theorem.

\begin{theorem} Every regular bipartite graph has a factor of first degree.\\
	\text{\normalfont This statement is a direct consequence of the following one, claiming even more.}\footnote{If we assume Theorem~A) to be true, then the graph $G_k$ of degree $k$ has a factor $G_1$ of first degree, moreover, $G = G_1 G_{k-1}$. Similarly, $G_{k-1} = G'_1 G_{k-2},G_{k-2} = G''_1 G_{k-3}$, and so on, where $G', G''$ and so on are also factors of first degree. Finally, $G = G_1 G'_1 \dots G^{k-1}_1$, which is indeed a decomposition into factors of first degree. Theorem~B) for even $k$ follows directly from a statement of Petersen~\cite[page 200]{Pet91}. It would not be challenging to reduce the case of odd $k$ to the same statement. The proof presented here is simpler and does not need case distinction, moreover it leads to more general results.}
	\end{theorem}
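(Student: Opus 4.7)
The plan is to reformulate the claim as a perfect-matching question and attack it with a maximum-matching plus alternating-walk argument whose final step exploits regularity. Let $G$ be regular of degree $n$ with vertex classes $A$ and $B$ supplied by the Observation. Counting edges from each side yields $n|A| = n|B|$, so $|A| = |B|$, and a factor of first degree of $G$ is exactly a matching that covers every vertex.

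I would then take a matching $M$ of maximum cardinality in $G$ and show that regularity forces $M$ to saturate everything. Suppose for contradiction that some $u \in A$ is unmatched. Let $S \subseteq A$ and $T \subseteq B$ be the sets of vertices reachable from $u$ along $M$-alternating walks, with $u \in S$. Since $M$ is maximum there is no augmenting walk, so every vertex of $T$ is matched by $M$ to some vertex of $S \setminus \{u\}$ and, conversely, the $M$-partner of every vertex of $S \setminus \{u\}$ lies in $T$; thus $M$ restricts to a bijection $S \setminus \{u\} \leftrightarrow T$, and $|T| = |S| - 1$. On the other hand, any $B$-neighbor of a vertex of $S$ must itself lie in $T$ (otherwise it would be reachable by one further alternating step), so all $n|S|$ edges leaving $S$ enter $T$, and counting these edges from $T$ yields $|S| \leq |T|$. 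The two estimates are incompatible, so no unmatched vertex exists and $M$ is a factor of first degree.

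The main obstacle is the double-counting step, which demands that $S$ and $T$ be set up carefully enough that both $|T| = |S| - 1$ (from maximality of $M$) and $|S| \leq |T|$ (from regularity, via the fact that \emph{every} edge leaving $S$ enters $T$) hold simultaneously; once this is in place the contradiction is immediate. A pleasant feature of this plan is that it does not split into the even-$n$ and odd-$n$ cases referenced in the footnote and does not invoke Petersen's 2-factorization; iterating it — remove the first-degree factor just produced and apply the same argument to the $(n-1)$-regular bipartite remainder — at once yields the decomposition of $G$ into $n$ factors of first degree foreshadowed there.
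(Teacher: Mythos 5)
Your argument is correct: the deficiency count $|T|=|S|-1$ from maximality of $M$ against $|S|\le|T|$ from regularity (since all $n|S|$ edge-ends leaving $S$ land in $T$, which can absorb at most $n|T|$ of them) is sound, and it survives the presence of parallel edges, which the paper permits. The one point to state explicitly is that a shortest alternating walk from the unmatched vertex $u$ to an unmatched vertex of $T$ is an augmenting path, which is what lets you conclude every vertex of $T$ is matched into $S\setminus\{u\}$; with that said, the bijection and the contradiction are immediate.

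However, your route is genuinely different from the paper's. The paper does not prove Theorem~A) directly at all: it derives A) from B) and B) from the stronger Theorem~C) (every bipartite graph of maximum degree $k$ admits an assignment of indexes $1,\dots,k$ to edges so that adjacent edges get distinct indexes), and proves C) by induction on the number of edges -- remove an edge $AB$, color the rest, and if no index is free at both ends, swap the two relevant indexes along a uniquely determined alternating \texttt{1-2} path from $A$, using bipartiteness only to rule out that this path terminates at $B$. You instead take a maximum matching and run an alternating-walk reachability argument whose final contradiction uses regularity in an essential way; this is the now-standard augmenting-path proof that a regular bipartite graph has a perfect matching. What the paper's detour buys is Theorem~C) itself, an edge-coloring statement about \emph{non-regular} bipartite graphs that is needed later (Theorems~E), F) and the chain of equivalences in Section~3) and that your counting argument cannot reach directly, since it leans on regularity; conversely, your proof is self-contained for A), iterates cleanly to give B), and recovers C) only if one adds the regularization construction the paper supplies separately as Theorem~J). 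Both proofs share the alternating-path mechanism and both avoid the even/odd case split mentioned in the footnote.
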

	\begin{theorem} Every regular bipartite graph of degree $k$ can be decomposed into $k$ factors of first degree.\\
	\text{\normalfont Again, this statement is a consequence of the following.}
	\end{theorem}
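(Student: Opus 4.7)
The plan is to derive Theorem B directly from Theorem A by repeated extraction of first-degree factors, proceeding by induction on the degree $k$. The footnote to Theorem A already sketches this reduction, so the task is mainly to articulate cleanly why the inductive step goes through.

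The base case $k=1$ is vacuous: the graph is itself a factor of first degree. For the inductive step, suppose the claim holds for all regular bipartite graphs of degree less than $k$, and let $G$ be a regular bipartite graph of degree $k$. By Theorem A, $G$ contains a factor $G_1$ of first degree. I would then consider the subgraph $G' := G \setminus G_1$ obtained by deleting the edges of $G_1$ from $G$ while keeping all vertices. Two routine verifications suffice: first, $G'$ is regular of degree $k-1$, since every vertex loses exactly one incident edge (the unique one belonging to $G_1$); second, $G'$ is bipartite, since it inherits a bipartition of the vertex set from $G$ and removing edges preserves the property that every edge crosses the partition. The inductive hypothesis applied to $G'$ yields a decomposition $G' = G'_1 G'_2 \cdots G'_{k-1}$ into $k-1$ factors of first degree, and adjoining $G_1$ produces the required decomposition $G = G_1 G'_1 \cdots G'_{k-1}$ of $G$ into $k$ factors of first degree.

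There is essentially no obstacle beyond citing Theorem A: the bookkeeping (bipartiteness preserved under edge removal, regularity degree decreasing by exactly one) is immediate from the definitions in Section~\ref{sec:1}. The only subtlety worth flagging is that Theorem A must apply at every stage, which is guaranteed precisely by the two verifications above — so the proof is really a one-line induction hanging on Theorem A, as the footnote anticipated.
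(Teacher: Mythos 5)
Your reduction of Theorem~B) to Theorem~A) is logically sound as an implication, and it is in fact exactly the argument the paper records in the footnote attached to Theorem~A) (``If we assume Theorem~A) to be true, then\dots\ $G = G_1 G'_1 \dots G^{k-1}_1$''). The bookkeeping you flag --- that deleting a factor of first degree from a regular bipartite graph of degree $k$ leaves a regular bipartite graph of degree $k-1$ --- is correct. The problem is that within this paper the argument is circular: Theorem~A) is never proved independently. The paper's stated logical order is the reverse of the numbering --- A) is declared ``a direct consequence'' of B), which is in turn a consequence of C), and the only proof supplied is a proof of C). So you are using A) to prove B) while the paper's only access to A) is \emph{through} B). The entire non-trivial content of the chain is the existence of even one factor of first degree, and your proposal takes precisely that for granted.

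The paper's actual route is to prove the strongest statement, Theorem~C), by induction on the number of edges: remove an edge $e = AB$, index the rest by the inductive hypothesis, and if no common missing index exists at $A$ and $B$, follow the alternating \texttt{1}--\texttt{2} path from $A$, show it can reach neither $A$ again nor $B$ (the latter using bipartiteness, since such a path plus $e$ would close an odd cycle), and swap indexes along it to free an index for $e$. Theorem~B) then falls out by grouping edges of equal index into factors, and A) is immediate from B). Note also the paper's pointed remark that induction ``fails for Theorem~B)'' but ``leads to success in the case of Theorem~C)'': the author strengthens the statement precisely so that an induction (on edges, not on degree) can close. To rescue your proposal you would need to supply an independent proof of Theorem~A) --- i.e.\ of the existence of a perfect matching in a regular bipartite graph --- at which point your peeling induction on $k$ is a perfectly good, and nowadays standard, way to get B).
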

	\begin{theorem} If each vertex of a bipartite graph is incident to at most $k$ edges, then it is possible to assign each edge of the graph an index between 1 and $k$ so that any two adjacent edges have different indexes.
\end{theorem}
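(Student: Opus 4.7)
The plan is to argue by induction on the number of edges of $G$. Pick any edge $e = uv$; by the inductive hypothesis, $G - e$ admits an assignment of indices $1, \ldots, k$ to its edges such that adjacent edges receive different indices. The task is then to extend this assignment to $e$, possibly after a local rearrangement of the existing indices.

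Because $u$ and $v$ each have degree at most $k$ in $G$, each has at most $k-1$ edges in $G - e$, so there is some index $\alpha$ absent at $u$ and some index $\beta$ absent at $v$. If $\alpha = \beta$, assign this common index to $e$. Otherwise, starting at $u$, I trace the maximal alternating walk $P = u, w_1, w_2, \ldots$ whose edges carry the indices $\beta, \alpha, \beta, \alpha, \ldots$ in order. This walk is forced to be a simple path: since each vertex meets at most one edge of each index, $P$ cannot branch, and a repeated vertex would either create a second $\alpha$-edge at $u$ (excluded by the choice of $\alpha$) or force some interior vertex to be incident to two edges of the same index (again excluded).

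The crucial step, and the only place where bipartiteness enters, is to show that $P$ does not terminate at $v$. Since $uv$ is an edge of the bipartite graph, $u$ and $v$ lie in opposite vertex classes, and so every path from $u$ to $v$ has an odd number of edges. If $P$ were to end at $v$, its last edge would have to bear index $\alpha$, for $\beta$ is by hypothesis absent at $v$; but the $\beta, \alpha$-alternation starting from $u$ places $\alpha$-edges only at even positions along $P$, forcing the length of $P$ to be even, in contradiction with the parity dictated by the bipartition. This parity argument is the step I expect to carry the weight of the proof, since it is what rules out a Kempe-chain obstruction of the kind that prevents the analogous statement for non-bipartite graphs.

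Once $v \notin P$, I interchange the two indices $\alpha$ and $\beta$ along the edges of $P$. A short verification shows that the resulting assignment on $G - e$ is still proper: at each interior vertex of $P$ the swap merely exchanges a matched $\alpha$/$\beta$ pair, while at the two endpoints a single edge changes index from one that was present to one that was previously absent. After the swap, $\beta$ is now missing at both $u$ and $v$, so $e$ may be assigned index $\beta$, completing the inductive step and the proof.
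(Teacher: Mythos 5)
Your proposal is correct and follows essentially the same route as the paper: induction on the number of edges, a maximal alternating (Kempe-chain) path from the endpoint missing one colour, the parity/bipartiteness argument to show the path cannot terminate at the other endpoint, and the index swap to free a common index for the removed edge. The only cosmetic difference is that you invoke the two-class characterization of bipartiteness directly, whereas the paper phrases the same contradiction as an odd closed walk formed by the path together with $e$.
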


Theorem~B) is indeed a direct consequence of Theorem~C), because in a regular graph of degree $k$ the edges of the same index form a factor of first degree.

\begin{proof}
We use induction to prove Theorem~C). Even though this method fails for Theorem~B), it leads to success in the case of Theorem~C). If the number of edges in $G$ is at most $k$, then the statement holds trivially. Thus we assume that it holds for graphs with less than $N$ edges and now we show correctness for graphs with $N$ edges.

If we omit an arbitrary edge $e$ from $G$, then we derive the bipartite graph~$G'$. Due to the assumption in the induction, $G'$ can be decomposed  in the wished way. The used indexes are now fixed. Say that the omitted edge $e$ runs between vertices $A$ and $B$. The first case is that there is an index that occurs in the neighborhood of neither $A$ nor~$B$. In this case we can assign this skipped index to $e$ and we are done. Thus we can now concentrate on the second case, namely that there is an index (say, \texttt{1}) missing from $B$, which occurs at~$A$. At $B$ there is certainly a missing index, because in $G'$ both $A$ and $B$ have at most $k-1$ edges incident to them. Let index \texttt{2} be a missing index at~$A$ and let edge $AA_1$ be the edge incident to $A$, which was assigned the index \texttt{1}. There is possibly an edge $A_1 A_2$ with index \texttt{2}, an edge $A_2 A_3$ with index \texttt{1}, an edge $A_3 A_4$ with index \texttt{2} and so on. We now create this alternating path $A, A_1, A_2, A_3, \dots$ as long as possible. Due to the indexes \texttt{1,2} this path is uniquely defined. We cannot reach the same vertex twice on this path, because then $A_i$ would be the first vertex reached for a second time and then the indexes \texttt{1,2} occur at least three times at~$A_i$. We also cannot end in $A$, because index \texttt{2} is missing from there and index \texttt{1} has already been used at edge~$AA_1$. Finally, we cannot reach $B$, because this could only happen via an edge with index \texttt{2} -- index \texttt{1} is missing at~$B$. Then, the path from $A$ to $B$ would consist of an even number of edges. This path together with the omitted edge $e$ would then form a cycle of odd length in $G$, which we assumed not to exist.

We derived that $A, A_1, A_2, A_3, \dots, A_r$ is a non-crossing alternating \texttt{1-2} path with two open ends, moreover $B$ is not on the path. We now swap the indexes \texttt{1} and \texttt{2} and leave the indexes in the rest of the graph unchanged. The modified indexes satisfy our requirements. This can be seen immediately for the starting vertex $A$, which had no edge with index \texttt{2}. The claim is trivial for the inner vertices $A_i$ and as of the end vertex $A_r$, it had exactly one of the indexes \texttt{1,2}, because otherwise the path would not have ended at~$A_r$. With this operation we have now reached out goal to assign $e$ an index, because index \texttt{1} does not occur at $A$ either.

With this we have proven Theorem~C) and thus Theorems~A) and B) too.
\end{proof}

Before we turn to the applications of these results in determinant theory and set theory we would like to point out that our Theorem~B) is closely related to a well-known problem in topology, namely the problem of \emph{map coloring}. The still open \emph{4-color-conjecture} states the following. The countries on a map can be colored in such a way that no two countries share the same color if they have a common line of border. Obviously, it is sufficient to consider the case when the borders in the map form a regular graph of degree~3. In this case the 4-color-conjecture is equivalent to the theorem of Tait~\cite[page 413]{Wer04}, which states that such a graph can be decomposed into factors of first degree. From this it does not follow that every graph of degree~3 can be decomposed into factors of first degree, because there are two necessary conditions for a graph to represent a planar map. 
\begin{enumerate}
	\item  It can be drawn into the plane (or sphere) so that no edge crossing occurs. 
	\item Each of the graph edges belongs to a crossing-free closed walk.
\end{enumerate}
 The theorem of Tait does not hold without these two restrictions, as one can show it on two counterexamples\footnote{The first counterexample is a remarkable graph of Petersen~\cite{Pet98}, the other counterexample is for instance a graph of degree~3 from Sylvester, which was also published as Figure 11 in the more often cited work of Petersen.}. Tait formulated his theorem without assuming these two properties and claimed it to be trivial~\cite{Tai84}. On the other hand, if the graph is bipartite, then the theorem holds even without the two conditions and this can be seen from our Theorem~B). Yet this delivers no new result about map coloring, because Kempe~\cite{Kem79} has already shown that three colors already suffice. In order to prove the 4-color-conjecture in the general case, it would certainly be beneficial to give a simple combinatorial characterization of graphs satisfying the first condition. The polyhedron theorem of Euler would certainly play an important role in such a result\footnote{This I have attempted in two papers published in Hungarian~\cite{Kon11a,Kon11b}}.

\section{Application to determinants}

Bipartite graphs and matrices on integer or real elements can be connected through a property of determinants, which depends only on the absolute value of the members\footnote{Translator's notice: A \emph{member} of a determinant is defined as $\prod_{i=1}^n{a_{i \sigma(i)}}$, where $\sigma$ is a permutation of the set $\{ 1,2,\dots,n\}$. Notice that according to the Leibnitz formula $\det(A) = \sum_{\sigma \in S_n}{sgn(\sigma)}\prod_{i=1}^n {a_{i \sigma(i)}}$.} and not on their sign.

First we consider a matrix $D= |a_{ik}|, i, k = 1, 2, \dots, n$ where the elements $a_{ik}$ are non-negative integers. The square matrix $D$ defines the following bipartite graph~$G$. The rows of $D$ represent vertices $A_1, A_2, \dots, A_n$, while the columns of $D$ represent vertices $B_1, B_2, \dots, B_n$ in~$G$. There are $a_{ik}$ edges running between vertices $A_i$ and~$B_k$. Notice that $a_{ik}$ might be 0 as well. Vertices within $A$ or within $B$ are never connected. The constructed graph is indeed bipartite, because every walk visits vertices in $A$ and $B$ in an alternating manner. Vertex $A_i$ is incident to as many edges as the sum of the elements in the $i$th row and similarly, $B_k$ is incident to as many edges as the sum of the elements in the $k$th column. Thus $G$ is regular if and only if the elements in every row and column of $D$ sum up to the same value. Since no two vertices in $A$ or in $B$ are adjacent, we can construct the following factor of first degree: $K_i = \{A_1 B_{i_1}, A_2 B_{i_2}, \dots, A_n B_{i_n}\}$, where $(i_1, i_2, \dots, i_n)$ is a permutation of the integers $1,2, \dots, n$. This $K_i$ is a factor of first degree of $G$ if and only if none of $a_{1i_1}, a_{2i_2}, \dots,a_{ni_n}$ is 0, and thus the product $a_{1i_1} \cdot a_{2i_2}\cdot \dots \cdot a_{ni_n}$ also cannot be~0. This product is exactly a general member of $D$, not considering the sign of it. Since Theorem~A) states that every regular graph has a factor of first degree, we have now arrived to the following theorem.

\begin{theorem} If each row and column sum of a determinant consisting of non-negative [integers] real numbers equals the same positive number, then at least one member of the determinant is non-zero.
\end{theorem}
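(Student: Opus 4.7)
The plan is to apply Theorem~A) to the bipartite multigraph $G$ that $D$ determines, which is exactly the construction spelled out in the paragraph preceding the statement. I would first dispatch the case in which every $a_{ik}$ is a non-negative integer. By construction, the degree of $A_i$ in $G$ equals the $i$th row sum of $D$ and the degree of $B_k$ equals the $k$th column sum, so the hypothesis forces $G$ to be a regular bipartite graph of degree $s$, where $s$ is the common row/column sum. Theorem~A) then yields a factor of first degree, which must have the form $\{A_1B_{i_1},\dots,A_nB_{i_n}\}$ for some permutation $(i_1,\dots,i_n)$ of $(1,\dots,n)$. Since each $A_jB_{i_j}$ is an actual edge of $G$, we have $a_{j,i_j}\ge 1$, so the member $a_{1,i_1}a_{2,i_2}\cdots a_{n,i_n}$ of $D$ is strictly positive.

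To pass from non-negative integers to non-negative reals, I would observe that the conclusion depends only on the sign pattern of $D$: a member $a_{1,i_1}\cdots a_{n,i_n}$ is non-zero precisely when each $a_{j,i_j}$ is non-zero. Hence it suffices to produce \emph{some} non-negative integer matrix with the same support and a common positive row and column sum, and then invoke the integer case. Such a matrix can be obtained by approximation: after scaling $D$ so that the common row/column sum equals $1$, the real matrices whose entries outside $\operatorname{supp}(D)$ vanish and whose rows and columns all sum to $1$ form an affine set cut out by rational linear equations, and $D$ lies in the relatively open subset of this affine set on which the entries indexed by $\operatorname{supp}(D)$ are strictly positive. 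A rational point of the affine set lying in this open subset, which exists by density, gives after clearing denominators the desired integer matrix.

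The integer case is essentially an immediate translation of Theorem~A), with no obstacle beyond recognising that the equal-row/column-sum hypothesis is the regularity of $G$. The only delicate point is the real-to-integer reduction: the rational approximation must preserve the support of $D$ \emph{exactly}, not merely within arbitrarily small error, which is why I would work inside the rational affine subspace that already forces every non-support entry to be zero and only then appeal to density of the rationals.
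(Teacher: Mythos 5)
Your proposal is correct and follows essentially the same route as the paper: you build the bipartite multigraph $G$ from $D$, identify the equal row/column sums with regularity, and read off a non-zero member from the factor of first degree supplied by Theorem~A). The only place you go beyond the paper is the passage from non-negative reals to non-negative integers, which the paper dismisses with ``it is easy to see''; your support-preserving rational approximation inside the affine set of matrices with prescribed zero pattern and unit row and column sums is a valid way to fill in that detail.
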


This theorem has been proved for integer matrices, but it is easy to see that it holds for matrices consisting of rational or real numbers. On the other hand, if fails if negative integers occur, such as in the following matrix, in which each row and column sum is 1, yet its determinant has 6 members of 0.

\[
  \begin{bmatrix}
    0 & 0 & 1 \\
		0 & 0 & 1 \\
		1 & 1 & -1 \\
  \end{bmatrix}
\]

Now we will consider the special case where all nonzero members of the determinant are exactly~1. In this case we can reformulate the condition on row and column sums, namely it is equivalent to each row and column containing the same number of 0 elements. Each graph factor of first degree corresponds to a member of the determinant then. Since the members of the determinant are zero or non-zero independently from the value of the non-zero elements, we derive the following theorem.

\begin{theorem}
If the number of non-zero elements in each each row and column is exactly $k$, then the determinant has at least $k$ non-zero members.

(These $k$ non-zero members can be chosen so that every non-zero element of the matrix occurs in exactly one non-zero member.)
\end{theorem}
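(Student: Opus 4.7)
The plan is to translate the hypothesis into a statement about a bipartite graph and then invoke Theorem~B).

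First I would form the bipartite graph $G$ exactly as in the construction preceding Theorem~D): the vertices split into $A_1,\dots,A_n$ (rows) and $B_1,\dots,B_n$ (columns), with an edge joining $A_i$ and $B_k$ precisely when $a_{ik}\neq 0$ (so here we use at most one edge per pair, rather than $a_{ik}$ parallel edges). By hypothesis every row contains exactly $k$ nonzero entries and every column contains exactly $k$ nonzero entries, so each $A_i$ and each $B_k$ has degree exactly $k$ in $G$. Thus $G$ is a regular bipartite graph of degree~$k$.

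Next I would apply Theorem~B) to decompose $G$ into $k$ edge-disjoint factors of first degree $G^{(1)}, G^{(2)}, \dots, G^{(k)}$. Each such factor $G^{(j)}$ is, as observed in the construction for Theorem~D), precisely a perfect matching of the form $\{A_1 B_{i_1}, A_2 B_{i_2}, \dots, A_n B_{i_n}\}$ for some permutation $(i_1, i_2, \dots, i_n)$ of $\{1, 2, \dots, n\}$, and corresponds to the determinant member $a_{1 i_1} a_{2 i_2} \cdots a_{n i_n}$. Because every edge of $G^{(j)}$ is present in $G$, all the factors $a_{\ell i_\ell}$ are nonzero, so this member is nonzero. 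This yields $k$ nonzero members of the determinant.

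For the parenthetical addendum, I would simply note that the factors $G^{(1)}, \dots, G^{(k)}$ partition the edge set of $G$ (this is exactly what a decomposition into $k$ factors of first degree means when the host graph has degree $k$). Under our correspondence, the edges of $G$ are precisely the nonzero entries of the matrix, and the edges of $G^{(j)}$ are precisely the entries occurring in the $j$th nonzero member. Hence each nonzero entry appears in exactly one of the $k$ chosen members, establishing the stronger claim.

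The argument is essentially a dictionary translation, so there is no real obstacle; the only point that requires care is ensuring that $G$ is constructed with simple edges rather than with multiplicities (as was done for Theorem~D)), so that a factor of first degree corresponds to a permutation in $S_n$ rather than to a choice of one out of several parallel edges. With this minor adjustment, Theorem~B) does all the work.
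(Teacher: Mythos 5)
Your proposal is correct and follows essentially the same route as the paper: K\H{o}nig likewise passes to the graph determined only by the pattern of non-zero entries (equivalently, the special case where every non-zero element is~1), notes it is regular bipartite of degree $k$, and applies Theorem~B), with the edge-disjointness of the $k$ factors giving the parenthetical addendum. Your remark about using simple edges rather than multiplicities is exactly the adjustment the paper makes implicitly by reducing to the 0--1 case.
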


For $k=1$ it is trivial to decide whether there are exactly $k$ non-zero members or more. The same question for the case $k=2$ can be answered with the help of the graph corresponding to the matrix. Interestingly, the case $k=2$ leads to the question of how many factors of first degree a regular graph of degree~3 contains. This seems to be much more challenging and it is also closely connected to the 4-color-conjecture.

Theorems~B) and E) can also be interpreted in the following manner.

\begin{theorem}
 Assume that we have $kn$ figures standing on a quadratic table of $n^2$ fields, so that one field might contain more than one figure. This configuration can be seen as a combination of $k$ such figure distributions, where each row and column contains one figure at most.
\end{theorem}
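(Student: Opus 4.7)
The plan is to reinterpret the configuration as a regular bipartite multigraph and apply Theorem~B) verbatim. First I would fix vertex classes $A_1,\dots,A_n$ for the rows and $B_1,\dots,B_n$ for the columns of the table, and draw one edge between $A_i$ and $B_j$ for each figure standing on field $(i,j)$. Since several figures may share a field, this produces parallel edges, which is permitted by the definition of a graph given in Section~\ref{sec:1}. The resulting graph $G$ is bipartite, as every edge runs between an $A$-vertex and a $B$-vertex, so the observation after the definition of bipartiteness is automatically satisfied.

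Next I would read off regularity. The degree of $A_i$ equals the number of figures in row $i$, and the degree of $B_j$ equals the number of figures in column $j$. The hypothesis, taken together with the conclusion that the distribution splits into $k$ row/column-simple layers, forces each row and each column to contain exactly $k$ figures (this is the only way $kn$ figures can admit such a decomposition). So $G$ is a regular bipartite graph of degree~$k$.

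Now Theorem~B) provides a decomposition $G = F_1 F_2 \cdots F_k$ into factors of first degree. Each factor $F_\ell$ is a perfect matching, consisting of $n$ edges meeting every $A_i$ and every $B_j$ exactly once. Translating $F_\ell$ back onto the table, its edges pick out $n$ figures forming a sub-distribution with at most (hence exactly) one figure in every row and in every column. The $k$ factors together account for every edge of $G$, hence for every figure in the original configuration, which is precisely the claimed decomposition.

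I do not expect any real obstacle: the argument is essentially a change of language from tables to bipartite multigraphs, and the substantive work has already been done in Theorem~C) and its corollary Theorem~B). The only point that deserves a sentence in the write-up is justifying why ``$kn$ figures'' implicitly means ``$k$ per row and $k$ per column,'' since without row/column regularity the statement would fail (a single row could carry all $kn$ figures) and Theorem~B) would not apply.
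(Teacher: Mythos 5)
Your proposal is correct and is essentially the paper's own argument: K\H{o}nig presents Theorem~F) precisely as a reinterpretation of Theorem~B) (via Theorem~E)), using the same rows-and-columns-as-vertex-classes, figures-as-edges correspondence set up in Section~2, so that the $k$ factors of first degree are exactly the $k$ row/column-simple sub-distributions. Your closing remark is also well taken --- the hypothesis must indeed be read as ``exactly $k$ figures in each row and each column,'' which is how the paper intends it, since Theorem~F) is offered as a restatement of the regular case.
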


An analogous interpretation can also be given to Theorem~C).

\section{Infinite graphs and application to set theory}

The presented analysis of graphs does not require any geometric interpretation of graphs. So far, we have always talked about $G$ as the set of its edges, this is, a set of pairs $(AB, CD, AE, \dots)$. These pairs were formed out of elements $(A,B,C \dots)$, called vertices. In order to model parallel edges between the same two vertices, one needs to permit elements with multiplicity in the edge set. All of our geometric terms (connected graph, regular graph, degree and closed walk in a graph) can be defined in an abstract manner, ignoring geometric representation and purely relying on the set~$G$. This observation becomes crucial once we allow sets $(A,B,C \dots)$ and $(AB, CD, AE, \dots)$ to be infinite, even to be of arbitrary cardinality. We remark that nothing stands in the way to define graph-theoretical terms for infinite graphs, moreover, it is not necessary to define regularity, degree, factors, walks and so on separately for infinite graphs. We remind the reader though that a graph is connected if between any two of its vertices there is a finite walk\footnote{It might be useful to note that the statement 'a vertex can be reached via an infinite walk from another vertex' does not make sense at all. Therefore, our phrasing 'between any two of its vertices there is a finite walk' can be replaced by 'between any two of its vertices there is a walk'. Thus, every closed walk of a graph consists of a finite number of edges.}. Without any geometrical interpretation one can prove the statement that even an infinite graph can be uniquely decomposed into connected components.

The notion of graphs is extended greatly if we do not restrict the size of the vertex and edge sets of a graph. There is only one restriction we still keep, namely we only consider graphs where the number of edges meeting in a vertex remains under a finite bound. We call a graph \emph{countable} if its edge set has a possibly infinite, but countable cardinality. Here we investigate countable graphs besides the finite graphs already studied and start with the following theorem. 

\begin{theorem}
 If in $G$ the number of edges meeting in any vertex remains under a finite bound $h$, then $G$ can be decomposed into finite and countable components\footnote{It is sufficient to assume that the number of edges meeting in any vertex is finite.}.
\end{theorem}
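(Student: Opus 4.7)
The plan is to fix a connected component $C$ of $G$ and show that its edge set is at most countable; by the footnote's clarification, we only need the hypothesis that every vertex has \emph{finite} degree. The key idea is that in an abstract graph, ``connected'' means joinable by a \emph{finite} walk, so $C$ can be exhausted by walks starting at a single base vertex.

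Concretely, I would pick any vertex $v \in C$ and, for each $n \in \mathbb{N}$, consider the set $W_n$ of walks of length $n$ in $G$ starting at $v$. Since each vertex has at most $h$ incident edges, the number of choices for the $(i{+}1)$-st edge of a walk is at most $h$, and hence $|W_n| \le h^n$, which is finite. The set $W := \bigcup_{n \ge 0} W_n$ is therefore a countable union of finite sets, hence countable.

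The next step is to observe that every edge of $C$ appears in some walk in $W$. Indeed, if $e = xy \in E(C)$, then by connectedness there is a finite walk from $v$ to $x$; appending the edge $e$ yields an element of $W$ containing $e$. Thus the map from $W$ to the edge set of $C$ sending each walk to, say, the set of edges it traverses, has image covering $E(C)$, so $|E(C)| \le \aleph_0 \cdot \aleph_0 = \aleph_0$. Hence $C$ is either finite or countably infinite.

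Finally, since $G$ admits a (unique) decomposition into its connected components (as noted in the introductory remarks to this section, which extend the uniqueness-of-decomposition statement to the infinite setting), and each component is finite or countable by the argument above, the theorem follows. The only delicate point is the first one: making sure the counting of walks really uses only the finite-degree hypothesis and the abstract definition of ``walk'' as a finite alternating sequence of vertices and edges; once this is set up properly, the rest is a routine cardinality estimate and there is no genuine obstacle.
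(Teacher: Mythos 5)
Your proposal is correct and follows essentially the same route as the paper: reduce to a single connected component, bound the number of walks (the paper: reachable vertices) of length $n$ from a fixed base vertex by $h^n$, and conclude countability of the component as a countable union of finite sets. The only cosmetic difference is that you count walks and cover the edge set directly, while the paper counts reachable vertices and then infers countability of the edge set from the finite-degree bound.
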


\begin{proof} It is sufficient to show that if $G$ is connected on the top of the assumption of the theorem, then $G$ is finite or countable. This is clear. From an arbitrary vertex $P$ we can reach at most $h$ vertices through a walk consisting of one edge, at most $h^2$ vertices through a walk consisting of two edges and at most $h^{\nu}$ vertices through a walk consisting of $\nu$ edges. Thus the set of vertices that are reachable from $P$ via a finite walk, this is, the set of all vertices in $G$ is finite or countable. From this follows that the number of edges is also finite or countable.
\end{proof}

Right after factors of first degree (a set of non-connected edges), factors of second degree are the simplest amongst infinite graphs. These can be decomposed into finite closed walks and walks of countably many edges, which are infinite at both ends. If a factor of second degree is bipartite, then clearly each component of it is a union of two factors of first degree. Therefore\footnote{This consequence is invalid for those who do not accept Zermelo's axiom of choice~\cite{Zer04}.} so is the entire graph. Thus, Theorems~A) and B) carry over to factors of second degree.

We now give examples for infinite graphs of higher degree. The quadratic grid of the plane represents an infinite factor of degree~4, while its 3-dimensional variant in the space is an infinite factor of degree~6.

Our Theorem~G) becomes important once we apply graphs in set theory. In many cases we can reduce problems to finite and countable sets, as we will soon demonstrate. We now use graphs\footnote{Using graphs, one can give a simple and graphic proof for the equivalence theorem of set theory, which was also proved by Bernstein. The proof of J. K\H{o}nig~\cite{Kon06} is the simplest known and it is indeed based on graph theory. There, only graphs of degree~2 are used, thus it seems to be superfluous to use the terminology of graphs, due to the minimalistic setting. I derived the results presented in this paper as I had attempted to use my father's methods for the above mentioned theorem of Bernstein.} to prove an extension of the following theorem of Bernstein\footnote{Bernstein's dissertation was defended in G\"{o}ttingen in 1901 and it was published as~\cite{Ber05}. He proves the case $\nu=2$ only and already this proof is very complicated. The proof presented in this paper deals with this case using straightforward intuitions. Bernstein's theorem is a direct consequence of Zermelo's well-ordering theorem. Our proof might be of interest even for those who accept Zermelo's results, even though we did not attempt to avoid Zermelo's axiom of choice. We only use the most basic notions of set theory, such as set, element, projection, while the concept of an ordering does not play a role at all.}.

\begin{theorem*}
	Let $m$ and $n$ be arbitrary cardinalities and let $\nu$ be a finite number. From $\nu m = \nu n$ follows that $m = n$.
\end{theorem*}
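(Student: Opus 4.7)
The plan is to encode the equality $\nu m = \nu n$ as a regular bipartite graph of degree $\nu$, and then read off a bijection between the two underlying sets from a factor of first degree produced by an infinite version of Theorem~B). Fix sets $A$ and $B$ with $|A|=m$, $|B|=n$, and fix a bijection $\varphi:\{1,\dots,\nu\}\times A \to \{1,\dots,\nu\}\times B$ witnessing $\nu m=\nu n$. Build a bipartite graph $G$ with vertex classes $A$ and $B$ by placing one edge between $a\in A$ and $b\in B$ for each pair $(i,j)\in\{1,\dots,\nu\}^2$ satisfying $\varphi(i,a)=(j,b)$ (parallel edges are explicitly allowed in the paper's setup). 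For fixed $a\in A$, each of the $\nu$ choices of $i$ determines a unique output $(j,b)$, so $a$ has degree exactly $\nu$; symmetrically every $b\in B$ has degree $\nu$ because $\varphi^{-1}$ is also a function. Thus $G$ is a regular bipartite graph of degree $\nu$, and any factor of first degree of $G$ is precisely a perfect matching between $A$ and $B$, that is, a bijection $A\to B$ showing $m=n$.

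The next step is to produce such a factor despite $G$ being possibly infinite. I would first apply Theorem~G): since $G$ has all degrees bounded by the finite number $\nu$, each connected component of $G$ is finite or countable. Working one component at a time, it suffices to exhibit a factor of first degree in each regular bipartite component and take the union over all components. For finite components this is the content of Theorem~A). For countable components I would use the extension of Theorems~A) and~B) to infinite graphs already recorded in the paper for factors of second degree, and bootstrap it to arbitrary finite degree~$\nu$ by peeling off a factor of first degree one at a time, exactly as Theorem~B) is derived from Theorem~A) in the finite setting.

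The main obstacle is precisely this extension of Theorem~A) to countable regular bipartite graphs of arbitrary finite degree. For degree~$2$ the paper disposes of it directly, because a factor of second degree decomposes into finite even cycles and two-sided infinite paths, each of which trivially splits into two matchings; the bipartiteness assumption excludes odd cycles. For degree~$\nu\ge 3$ one cannot repeat the finite inductive proof of Theorem~C) verbatim, since the number of edges is no longer finite, so one has to invoke a transfinite alternating-path argument or, as the author signals in the footnote accompanying factors of second degree, Zermelo's axiom of choice to simultaneously select a first-degree factor across the countably many components. Once this step is granted, the reduction from $\nu m=\nu n$ to $m=n$ is immediate from the matching described above.
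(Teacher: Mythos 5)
Your reduction to a regular bipartite graph of degree $\nu$ and the appeal to Theorem~G) to split into finite and countable components both match the paper. But your route then diverges in a way that opens a genuine gap: you insist on extracting a \emph{factor of first degree} (a perfect matching) from each component, and for a countable component of degree $\nu\ge 3$ you concede that the finite induction fails and wave at ``a transfinite alternating-path argument'' or the axiom of choice. Neither suggestion closes the hole. The axiom of choice only handles the separate, minor issue of simultaneously selecting one object per component; it does not produce a matching inside a single countable regular bipartite graph of degree $3$. And the existence of such a matching is precisely the statement the paper itself singles out as unresolved: Theorem~I) is shown to be equivalent to Theorem~A) for infinite graphs, and the author writes ``we have not succeeded in proving such a generalization.'' So the lemma on which your argument hinges is exactly the open problem of the paper, and ``once this step is granted'' is not a proof of it.

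The paper's own proof avoids matchings entirely, and this is the idea you are missing. For each component $G_1$, with vertex classes of cardinality $m_1$ and $n_1$, count the edge set of $G_1$ in two ways: regularity gives $\nu m_1 = \nu n_1$. By Theorem~G) both $m_1$ and $n_1$ are finite or countable, and for finite or countable cardinalities the cancellation $\nu m_1=\nu n_1\Rightarrow m_1=n_1$ is trivial (finite arithmetic in one case; in the other, both sides force both sets to be countably infinite, hence equipotent). Summing the resulting equivalences over all components (this is where choice enters, as the author acknowledges) gives $m=n$. No factor of first degree is ever needed, which is what makes the proof go through for arbitrary $\nu$ while Theorem~I) remains open. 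You should either adopt this cardinality-cancellation argument per component, or restrict your claim to $\nu$ a power of $2$, the only infinite case for which the paper actually establishes the matching you rely on.
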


This theorem claims the following. Assume that sets $M$ and $N$ have cardinality $m$ and $n$, respectively, and we substitute every element of those by $\nu$ different elements. If the  equality $\nu m = \nu n$ holds for the cardinality of the two sets derived from $M$ and $N$, then sets $M$ and $N$ were also equivalent. The theorem can also be formulated without using the notion of cardinality.

\begin{theorem}
	If two sets are in a reversible $(1,\nu)$-relation to each other, then they are equivalent.
\end{theorem}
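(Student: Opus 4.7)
The plan is to recast the hypothesis as a graph-theoretic matching question and then apply Theorems~A) and~G) (plus an extension of Theorem~A) to countable graphs).

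First I would encode the reversible $(1,\nu)$-relation between $M$ and $N$ as a bipartite graph $G$ with vertex classes $M$ and $N$, placing one edge between $m\in M$ and $n\in N$ whenever $m$ and $n$ are in relation. By the reversible $(1,\nu)$-assumption every vertex of $M$ meets exactly $\nu$ edges, and symmetrically for every vertex of $N$, so $G$ is $\nu$-regular bipartite with $\nu$ finite. A factor of first degree of $G$ is, by definition, a set of pairwise disjoint edges covering every vertex exactly once, and reading it as a function $M\to N$ gives a bijection between the two sets. Hence it suffices to exhibit a factor of first degree in $G$.

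Second, since the degrees are bounded by the finite number $\nu$, Theorem~G) decomposes $G$ into connected components, each of which is finite or countable. I would then produce a factor of first degree in every component separately and take the union. For a finite component --- a finite $\nu$-regular bipartite graph --- this is Theorem~A) verbatim. For each countable component one needs the countable analogue of Theorem~A): a countable $\nu$-regular bipartite graph still admits a factor of first degree.

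The main obstacle is precisely this countable analogue, because the proof of Theorem~C) on which Theorem~A) ultimately rests is an induction on the number of edges and has no base once the edge set is infinite. My plan is to circumvent this by a compactness-type argument made legitimate by the axiom of choice explicitly invoked in the footnote: every finite subgraph of the countable component admits a proper $\nu$-edge-coloring by Theorem~C), and one combines these finite colorings into a global proper $\nu$-edge-coloring of the whole component, for instance by a diagonal extraction along an enumeration of the edges. Any single color class of the resulting global coloring is a factor of first degree of the component. The bipartiteness, the local finiteness inherited from the bound $\nu$, and the regularity are exactly what make this passage from finite to countable possible. Taking the union of the factors of first degree over all components of $G$ yields the required bijection $M\to N$, so $M$ and $N$ are equivalent.
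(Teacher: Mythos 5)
Your argument is correct in substance but takes a genuinely different --- and much heavier --- route than the paper's. You reduce Theorem~H) to producing a factor of first degree in every component, and then need the countable case of Theorem~A), which you propose to obtain by compactness. That step does work for a countable graph of bounded degree: properly $\nu$-index each finite initial segment of an enumeration of the edges by Theorem~C), extract a coherent limit indexing by successively refining subsequences, and observe that in a $\nu$-regular graph each index class of the limit is a factor of first degree. But be aware that this middle step is not a lemma available in the paper; it is precisely the statement the paper declares open (``Our methods do not carry over \dots not even for degree~3''), so it must be proved in full, not waved at, and a careless ``diagonal extraction'' hides the real content (a proper indexing of $\{e_1,\dots,e_n\}$ need not extend to $\{e_1,\dots,e_{n+1}\}$, so one must pass to indexings that survive to arbitrarily large segments, or refine subsequences as above). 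The paper avoids all of this: after building the same $\nu$-regular bipartite graph and splitting it into finite and countable components by Theorem~G), it never constructs a matching at all. It merely counts: in a finite component the edge set has cardinality $\nu m_1=\nu n_1$ with $m_1,n_1$ finite, hence $m_1=n_1$; in an infinite component both vertex classes are countably infinite, hence equivalent; the componentwise bijections are then glued using the axiom of choice. That elementary counting is all that Theorem~H) requires. What your approach buys, if completed carefully, is strictly more: the countable case of Theorem~A) and with it Theorem~I) --- but that is solving the paper's open problem, not citing one of its results.
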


First we define the term 'reversible $(1,\nu)$-relation' formally. We say that $M$ is in a $(1,\nu)$-relation to $N$ if every element of $M$ corresponds to $\nu$ elements of~$M$. These $\nu$ elements of~$M$ do not need to be different, because to each $M$-element we assign each of the corresponding $N$-elements with a multiplicity. Thus the $\nu$ elements of $N$ corresponding to an element $a \in M$ can be listed as $(b_1, b_2, \dots, b_{\mu})$, where $\mu \leq \nu$. These elements are assigned multiplicity $(s_1, s_2, \dots, s_{\mu})$, respectively, so that $s_1 + s_2 + \dots+ s_{\mu} = \nu$. Such a relation from $M$ to $N$ is called \emph{reversible} if the corresponding relation from $N$ to $M$ is also a $(1,\nu)$-relation. In other words, a reversible $(1,\nu)$-relation possesses the property that if $b \in N$ is assigned to $a \in M$, then $a$ is also assigned to $b$ with the same multiplicity.

\begin{proof} This setting allows us to represent the reversible $(1,\nu)$-relation of two sets $M$ and $N$ through bipartite regular graphs. Since $M$ and $N$ have cardinality $m$ and $n$, respectively, this is the same as proving the correctness of the equality $\nu m = \nu n$. We define graph $G$ as follows. The elements of $M$ and $N$ are $G$'s vertices, and a vertex in $M$ is connected to a vertex in $N$ via $s$ edges, where $s$ is the multiplicity of the assignment between the two elements. Two vertices within $M$ or within $N$ are never connected by an edge. The constructed graph is indeed bipartite, because every closed walk visits vertices in $M$ and $N$ in an alternating manner. The graph is also regular, because each of its vertices is incident to $\nu$ edges.

According to Theorem~G), this graph $G$ can be decomposed into finite and countable components. Let $G_1$ be an arbitrary component of $G$, containing vertex sets $M_1$ and $N_1$ with cardinality $m_1$ and $n_1$, respectively. We know that $\nu m_1 = \nu n_1$, because both side of the equation correspond to the cardinality of the edge set of~$G_1$. Since $m_1$ and $n_1$ are finite or countable, and for this case the theorem of Bernstein is trivial, we derived $m_1 = n_1$. The same argument can be repeated for every component of $G$. Adding these up we arrive to $m = n$, which proves the theorem of Bernstein.
\end{proof}

If we want to generalize the theorem of Bernstein (Theorem~H)), we stumble upon serious difficulties. We would like to emphasize that we have not succeeded in proving such a generalization.

\begin{theorem}
	If two sets are in a reversible $(1,\nu)$-relation to each other, then there is a bijection that assigns elements to each other if and only if they are assigned to each other in the given reversible $(1,\nu)$-relation.
\end{theorem}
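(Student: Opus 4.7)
The plan is to follow the graph-theoretic translation already used for Theorem~H). The reversible $(1,\nu)$-relation between $M$ and $N$ is encoded by the bipartite graph $G$ whose parts are $M$ and $N$ and which is regular of degree $\nu$; a bijection of the kind required by Theorem~I) is precisely a factor of first degree of $G$. By Theorem~G), $G$ decomposes into connected components that are each finite or countable, so it suffices to produce a factor of first degree in every component and then take the union of the results.

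Finite components cause no trouble: each of them is a finite regular bipartite graph, and Theorem~A) supplies a factor of first degree at once. For a countably infinite connected component $G_1$, still regular of finite degree $\nu$, the case $\nu = 2$ is already dispatched by the remark made earlier in the text: a bipartite factor of second degree is a disjoint union of finite even cycles and doubly-infinite paths, and each such component admits the required two-coloring of its edges. A natural attempt for general $\nu$ would be either to peel off factors of second degree inductively and reduce to the $\nu = 2$ case, or to imitate the augmenting-path construction of Theorem~C): enumerate the vertices of $G_1$ as $v_1, v_2, \ldots$, maintain a partial matching, and at each stage match the next unmatched vertex by rerouting along a finite alternating path.

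The main obstacle, which the author explicitly concedes not having overcome, is that neither strategy survives the passage to the countable case when $\nu \geq 3$. The induction of Theorem~C) runs on the number of edges and therefore has no base case when the edge set is infinite. An alternating path started at an unmatched vertex in an infinite $\nu$-regular graph can run forever without ever meeting a second unmatched vertex, so the rerouting step may simply fail to close. The decomposition into smaller-degree factors is unavailable for odd $\nu$, since stripping off a single factor of first degree is exactly the problem one is trying to solve, and even for even $\nu$ the existence of a $2$-regular subfactor on a countable component has not been established. The plan therefore reduces Theorem~I) to the statement that every countable connected regular bipartite graph of finite degree contains a factor of first degree, and this is the gap the paper openly leaves.
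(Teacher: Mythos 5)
Your proposal is not a proof, but neither is anything in the paper: immediately before stating Theorem~I) the author writes that ``we have not succeeded in proving such a generalization,'' and the text that follows only establishes equivalences and partial cases. Measured against what the paper actually does, your reconstruction is faithful. You make the same translation (a bijection of the required kind is exactly a factor of first degree of the bipartite $\nu$-regular graph $G$), invoke Theorem~G) to reduce to finite and countable components, dispose of the finite components by Theorem~A) and of degree $\nu=2$ by the decomposition of a bipartite factor of second degree into even cycles and doubly infinite paths, and then correctly diagnose why both the edge-induction of Theorem~C) and the alternating-path rerouting break down on a countably infinite component of degree $\nu\geq 3$. That is precisely the gap the paper concedes.

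Two small refinements from the paper that your sketch undersells: via Theorem~K) the author shows that if degrees $\mu$ and $\nu$ are both tractable then so is $\mu\nu$, which (together with the degree-$2$ case for infinite graphs) settles Theorem~I) whenever $\nu$ is a power of $2$, and reduces the general problem to prime $\nu$; so your remark that ``even for even $\nu$'' nothing is established is slightly too pessimistic. Also note that the paper flags a use of Zermelo's axiom of choice when assembling the factors chosen on infinitely many components into one factor of $G$; your ``take the union of the results'' step silently makes the same appeal. Neither point changes the verdict: the statement is left open in the paper for general $\nu$, and your account identifies the obstruction exactly where the author does.
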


Two elements are assigned to each other if and only if the corresponding vertices are connected via an edge in the above defined graph. Thus, Theorem~I) simply claims that $G$ has a factor of first degree. To prove it, it would be sufficient to show the correctness of 
Theorem~A) for infinite graphs. Even more, according to Theorem~G) it is only necessary to prove Theorem~A) for countable graphs. The other direction is also true: assuming Theorem~I), we derive Theorem~A) for infinite graphs. This is because every bipartite graph of degree $\nu$ can be seen as a graph representing the reversible $(1,\nu)$-relation of two sets. As we have already shown it for finite graphs, the vertices of infinite graphs can also be partitioned into two sets so that every edge runs between vertices in different sets\footnote{If the graph can be decomposed into infinitely many components, then once again we need to assume Zermelo's axiom of choice.}. 

With this we have shown the equivalence of Theorem~I) and Theorem~A) for finite and infinite graphs. Theorems~B) and C) are equivalent to these as well. This is because Theorem~B) is a consequence of Theorem~C) and it can be seen easily for infinite graphs that Theorem~A) also follows from Theorem~B) and finally, the reverse direction holds as well, namely that Theorem~B) is a consequence of Theorem~A) and Theorem~C) is a consequence of Theorem~B). Only the very last observation requires a proof.

\begin{theorem}
Theorem~C) is a consequence of Theorem~B).
\end{theorem}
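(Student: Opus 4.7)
The plan is to embed $G$ into a $k$-regular bipartite graph $G^*$ on a larger vertex set, apply Theorem~B) to decompose $G^*$ into $k$ factors of first degree, and then restrict the resulting index assignment to the edges of~$G$.

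First I would fix a bipartition $(A,B)$ of the vertex set of $G$, which exists by the Observation at the beginning of the paper (and carries over to infinite graphs as noted in the footnote preceding the statement). I would then form $G^*$ as follows: take a disjoint second copy $G'$ of $G$ with bipartition $(A',B')$, so that each vertex $v$ of $G$ has a twin $v'$ in~$G'$. For every such $v$, add exactly $k-\deg_G(v)$ parallel edges joining $v$ to~$v'$; this number is non-negative by the hypothesis of Theorem~C).

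Next I would verify two properties of $G^*$. It is bipartite with respect to the partition $(A\cup B',\,B\cup A')$: edges inherited from $G$ go between $A$ and $B$, edges of $G'$ go between $A'$ and $B'$, and each new twin edge $vv'$ with $v\in A$ joins a point of $A$ to a point of $A'$, while with $v\in B$ it joins a point of $B$ to a point of $B'$, so in every case its endpoints lie on opposite sides of the new bipartition. It is also $k$-regular: every $v\in A\cup B$ keeps its $\deg_G(v)$ original edges and gains $k-\deg_G(v)$ new ones, while its twin $v'\in A'\cup B'$ has degree $\deg_G(v)$ in $G'$ and likewise acquires $k-\deg_G(v)$ new edges. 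Applying Theorem~B) to $G^*$ then yields a decomposition into $k$ factors of first degree $F_1,\dots,F_k$; assigning index $i$ to the edges of $F_i$ gives a legitimate index function on $G^*$, and this restricts to a legitimate index function on $G$, since two edges of $G$ that share a vertex in $G$ still share it in~$G^*$.

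The main point requiring care is that the argument must pass to the infinite setting as well, since Theorem~C) has been claimed to hold for infinite bipartite graphs too. Fortunately, the construction is entirely local: only finitely many edges (at most $k$) are added at each vertex, so the hypothesis that every vertex has finitely bounded degree is preserved in $G^*$, and Theorem~B) in the form established for infinite graphs above can indeed be invoked.
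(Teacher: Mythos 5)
Your proposal is correct and follows essentially the same route as the paper: K\H{o}nig likewise forms a $k$-regular bipartite graph by taking a second copy of $G$ and joining each vertex to its twin by $k-\alpha$ parallel edges (where $\alpha$ is its degree), places the twins on the opposite sides of the bipartition, applies Theorem~B), and restricts the resulting factors to~$G$. Your added remark that the construction is local and so survives the passage to infinite graphs is a small but sensible supplement not spelled out in the paper's own proof.
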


\begin{proof}
 Let $G$ be an arbitrary bipartite graph with the property that every vertex of it is incident to at most $k$ edges. We create a new graph $H$ by adding vertices and edges to~$G$. To each vertex in $G$ we create a new vertex and connect two new vertices with the same amount of edges that connect their original copies in~$G$. Moreover, each vertex in $G$ is connected to its new copy via $k -\alpha$ edges, where $\alpha \leq k$ is the number of edges that are incident to the vertex in~$G$. We do not add any more vertices or edges. The created graph $H$ is regular, because each of its vertices is incident to $\alpha + (k- \alpha) = k$ edges. Moreover, $H$ is a bipartite graph, which can be proved as follows. Since $G$ was a bipartite graph, its vertices form sets $I$ and $II$, so that every edge in $G$ connects vertices in different sets. Now we order the new points to sets $I$ and $II$ depending on which set their original copy belongs to, ensuring that the edges connecting the original and new copies run between vertices in different sets. We now assume Theorem~B) and observe that $H$ can be decomposed into $k$ factors of first degree. Depending on which factor it belongs to we assign an index between $1$ and $k$ to each edge. This assignment of indexes corresponds to the claim in Theorem~C).
\end{proof}

Finally, we show that it would be sufficient to prove the equivalent Theorems~A), B), C) and I) in the case where the degree in Theorem~A) and B), $k$ in Theorem~C) and $\nu$ in Theorem~I) is a prime number. For Theorem~A, this follows from Theorem~K), and thus it holds for the remaining three theorems as well. 

\begin{theorem}
	If every bipartite graph of degree $\mu$ and every bipartite graph of degree $\nu$ has a factor of first degree, then every bipartite graph of degree $\mu \nu$ also has a factor of first degree.
\end{theorem}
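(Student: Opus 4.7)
The plan is to factor the problem through an auxiliary bipartite graph of the smaller degree $\nu$, then apply the two hypotheses in sequence. Let $G$ be a bipartite graph of degree $\mu\nu$ on vertex classes $A$ and $B$. At each vertex $v \in A\cup B$, I would fix an arbitrary partition of the $\mu\nu$ edges incident to $v$ into $\mu$ groups of exactly $\nu$ edges each. Then I would build $H$ by replacing every vertex $v$ by $\mu$ clones $v_1,\dots,v_\mu$ and routing each edge $e=(a,b)$ of $G$ to the unique edge $(a_i,b_j)$ of $H$ determined by the groups containing $e$ at its two endpoints. By construction, $H$ inherits a bipartition (clones of $A$-vertices versus clones of $B$-vertices) and every clone $v_i$ is incident to exactly $\nu$ edges, so $H$ is bipartite and regular of degree $\nu$.

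Next, I would invoke the hypothesis for $\nu$ to extract a factor of first degree $F \subseteq E(H)$. The crucial bookkeeping step is then to pull $F$ back through the canonical bijection $E(G) \leftrightarrow E(H)$ to a subset $F'\subseteq E(G)$. At any $v \in V(G)$, each of the $\mu$ clones $v_i$ is covered by exactly one edge of $F$; these $\mu$ edges live in distinct groups of the partition at $v$, hence correspond to $\mu$ distinct edges at $v$ in $G$. Consequently $F'$ is a bipartite subgraph of $G$ in which every vertex has degree exactly $\mu$ — a factor of degree $\mu$.

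Finally, I would apply the hypothesis for $\mu$ to the graph $F'$, obtaining a factor of first degree of $F'$; since $F'\subseteq E(G)$ and every vertex of $G$ lies in $F'$, this is also a factor of first degree of $G$, completing the proof.

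The main obstacle is not a deep one but rather a bookkeeping subtlety: one must verify that the cloning construction correctly preserves the edge-count at each clone (so that $H$ is genuinely $\nu$-regular and not merely has average degree $\nu$) and that the pulled-back edges are indeed distinct at each original vertex, which uses that the groups at a fixed vertex are pairwise disjoint. A secondary point, relevant only in the infinite setting, is that the simultaneous choice of a partition at every vertex of $G$ requires the axiom of choice, and that to apply the hypothesis for $\nu$ to an uncountable $H$ one would pass through Theorem~G) to reduce to countable components; neither subtlety affects the structure of the argument.
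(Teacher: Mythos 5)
Your proposal is correct and is essentially the paper's own argument: splitting each vertex into $\mu$ clones carrying $\nu$ edges each to obtain a bipartite graph of degree $\nu$, extracting a factor of first degree, merging the clones back to get a factor of degree $\mu$ of $G$, and applying the second hypothesis to that factor. The only difference is expository --- you make the per-vertex partition and the edge bijection explicit, and you flag the choice-theoretic caveat in the infinite case, which the paper leaves implicit here.
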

\begin{proof}
Assume that every bipartite graph of degree $\mu$ and every bipartite graph of degree $\nu$ has a factor of first degree and let $P$ be an arbitrary vertex of an arbitrary bipartite graph $G$ of degree $\mu \nu$. We substitute $P$ by $\mu$ vertices $P_1, P_2, \dots, P_{\mu}$ and reroute the $\mu \nu$ edges of $P$ to run to one of $P_1, P_2, \dots, P_{\mu}$ so that each $P_i$, $(i = 1, 2, \dots, \mu)$ is incident to exactly $\nu$ of these edges. Having executed this operation to every vertex of $G$ we derive a graph $G_{\nu}$ of degree~$\nu$. This is a bipartite graph, because every closed walk in $G_{\nu}$ corresponds to a closed walk in $G$ with the same set of edges. According to our assumptions, $G_{\nu}$ has a factor $G_1$ of first degree. We now unite the vertices $P_1, P_2, \dots, P_{\mu}$ to $P$ and get the original graph $G$ back. This transforms $G_1$ into a factor $G_{\mu}$ of degree $\mu$. As $G_{\mu}$ is a subgraph of $G$, it is also bipartite and it has a factor of first degree. This is also a factor of first degree in $G$ and thus we have shown Theorem~K). 
\end{proof}

The following, partly more general result can be shown by repeating the first part of the above argumentation almost word-by-word.

\begin{theorem*}
	If every bipartite graph of degree $\nu$ has a factor of degree $k$, then every bipartite graph of degree $\mu \nu$ has a factor of degree $\mu k$.
\end{theorem*}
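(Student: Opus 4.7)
The plan is to reuse the vertex-splitting construction from the proof of Theorem~K), but to stop earlier and in a more general way. I would start with an arbitrary bipartite graph $G$ of degree $\mu\nu$ and, for each vertex $P$, replace $P$ by $\mu$ copies $P_1, P_2, \dots, P_\mu$, redistributing the $\mu\nu$ edges formerly incident to $P$ so that each $P_i$ carries exactly $\nu$ of them. Call the resulting graph $G_\nu$. Exactly as in the proof of Theorem~K), $G_\nu$ is regular of degree $\nu$, and any closed walk in $G_\nu$ uses the same multiset of edges as a closed walk in $G$, so $G_\nu$ inherits bipartiteness.

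Next, I would invoke the hypothesis in its stronger form: since every bipartite graph of degree $\nu$ has a factor of degree $k$, the graph $G_\nu$ contains a factor $H$ of degree $k$. Then I would collapse each group $P_1, P_2, \dots, P_\mu$ back to the single vertex $P$, obtaining $G$ again and carrying $H$ along as a subgraph $H'$ of $G$. At the vertex $P$, the edges of $H'$ are precisely the disjoint union of the edges of $H$ meeting $P_1, P_2, \dots, P_\mu$, and each $P_i$ is incident to exactly $k$ edges of $H$. Hence $P$ is incident to exactly $\mu k$ edges of $H'$, and since this holds for every vertex, $H'$ is a factor of $G$ of degree $\mu k$.

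Almost nothing in this sketch is delicate: the mild points are that the edges contributed by different $P_i$'s really are distinct edges of $G$ (true, since the splitting only relabels endpoints and never identifies edges), and that the redistribution of edges can in fact be carried out so each $P_i$ receives exactly $\nu$ edges. For a single finite vertex this is a triviality, and for infinite graphs it is the same choice step already used silently in Theorem~K), relying on Zermelo's axiom if infinitely many vertices require splitting. Beyond that the argument is purely a matter of reading the proof of Theorem~K) with ``factor of first degree'' replaced everywhere by ``factor of degree $k$'' and ``factor of degree $\mu$'' replaced by ``factor of degree $\mu k$,'' which is exactly the verbatim-repetition observation the text is pointing to.
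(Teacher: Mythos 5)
Your proposal is correct and is precisely the argument the paper intends: it explicitly says the result follows ``by repeating the first part of the above argumentation [for Theorem~K)] almost word-by-word,'' i.e.\ split each vertex into $\mu$ copies of degree $\nu$, extract a factor of degree $k$ from the resulting bipartite graph, and merge the copies back to obtain a factor of degree $\mu k$. Your additional remarks on the distinctness of edges after merging and on the choice step for infinite graphs are sound and only make explicit what the paper leaves implicit.
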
 

We have shown Theorems~A) and B) for degree~2 even for infinite graphs. It follows from Theorem~K) that Theorems~A), B), C) and I) are certainly correct if the degree, $k$ in Theorem~C) and $\nu$ in Theorem~I) is a power of~2. Our methods do not carry over for arbitrary degree, as a matter of fact, not even for degree~3, even though we can restrict ourselves to countable graphs as we have seen it. The induction we used in the proof of Theorem~C) fails immediately if the graph has infinitely many edges.

\medskip

The results in this work were submitted to the Hungarian Academy of Sciences on November 15, 1915.
\vspace{-0.5cm}
\bibliographystyle{abbrv}
\bibliography{mybib}
\newpage

\begin{wrapfigure}{r}{0.26\textwidth}
    \includegraphics[width=0.26\textwidth]{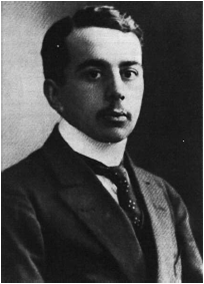}
\end{wrapfigure}

		\textbf{D\'enes K\H{o}nig} (September 21, 1884 - October 19, 1944) was a Hungarian mathematician of Jewish origin, who worked in and wrote the first textbook on the field of graph theory.
		\smallskip

K\H{o}nig was born in Budapest, as the son of mathematician Gyula K\H{o}nig. In 1899, while still attending high school he published his first work in the journal 'Matematikai \'es Fizikai Lapok'. After his graduation in 1902, he won first place in a mathematics competition 'E\"otv\"os Lor\'and'. He spent four semesters in Budapest and his last five in G\"ottingen, during which he studied under J\'ozsef K\"ursch\'ak and Hermann Minkowski.
\smallskip

He received his doctorate in 1907 due to his dissertation in geometry. The same year he began working for the 'Technische Hochschule in Budapest' (today Budapest University of Technology and Economics) and remained a part of the faculty till his death in 1944. At first he started as an assistant in problem sessions, in 1910 he was promoted to 'Oberassistant', and then promoted to 'Privatdocent' in 1911 teaching nomography, analysis situs (later to be known as topology), set theory, real numbers and functions, and graph theory (the name 'graph theory' didn't appear in the university catalog until 1927). During this time he would be a guest speaker giving mathematics lecture for architecture and chemistry students. In the years 1918, 1920 and 1936 K\H{o}nig  published three textbooks. He became a full professor in 1935.
\smallskip

From 1915 to 1942 he was on a committee to judge school contests in mathematics, collecting problems for these contests, and organizing them. 
K\H{o}nig's activities, lectures and his book 'Theorie der endlichen und unendlichen Graphen' played a vital role in the growth of graph theoretical work of L\'aszl\'o Egyed, P\'al Erd\H{o}s, Tibor Gallai, Gy\"orgy Haj\'os, J\'ozsef Kraus, Tibor Szele, P\'al Tur\'an, Endre V\'azsonyi, and many others. He marked the beginning of graph theory as its own branch of mathematics.
\smallskip

K\H{o}nig, although of Jewish origin, was brought up a Christian and so was not in danger from the  persecution of Hungarian Jews for a while. In fact he worked to help Jewish mathematicians. On October 15, 1944 the German troops invaded Hungary. Days later on October 19, 1944 he committed suicide to evade persecution from the Nazis. 
\begin{flushright}
{\color{gray}Source: Wikipedia and http://www-history.mcs.st-and.ac.uk/}
\end{flushright}

\end{document}